\algrenewcommand\algorithmicrequire{\textbf{Input:}}
\algrenewcommand\algorithmicensure{\textbf{Output:}}
\algrenewcommand\algorithmicrequire{\textbf{Input:}}
\algrenewcommand\algorithmicensure{\textbf{Output:}}
\newtheorem{theorem}{Theorem}[subsection]
\theoremstyle{remark}
\newtheorem*{remark}{Remark}
\newtheorem{definition}{Definition}[subsection]
\algnewcommand\algorithmicforeach{\textbf{for each}}
\title{Robust estimations from distribution structures: III. Invariant Moments}
\author[a,b,c,1]{Li Tuobang}
\keywords{moments $|$ invariant $|$ unimodal $|$ adaptive estimation $|$ $U$-statistics} 
\begin{abstract}
Descriptive statistics for parametric models are currently highly sensative to departures, gross errors, and/or random errors. Here, leveraging the structures of parametric distributions and their central moment kernel distributions, a class of estimators, consistent simultanously for both a semiparametric distribution and a distinct parametric distribution, is proposed. These efficient estimators are robust to both gross errors and departures from parametric assumptions, making them ideal for estimating the mean and central moments of common unimodal distributions. This article opens up the possibility of utilizing the common nature of probability models to construct near-optimal estimators that are suitable for various scenarios.

\end{abstract}
\begin{document}

\maketitle
\thispagestyle{firststyle}
\ifthenelse{\boolean{shortarticle}}{\ifthenelse{\boolean{singlecolumn}}{\abscontentformatted}{\abscontent}}{}

\vspace*{-10pt}
\dropcap{T}he potential biases of robust location estimators in estimating the population mean have been noticed for more than two centuries \cite{gauss1823theoria}, with numerous significant attempts made to address them. In calculating a robust estimator, the procedure of identifying and downweighting extreme values inherently necessitates the formulation of distributional assumptions. Previously, it was demonstrated that, due to the presence of infinite-dimensional nuisance shape parameters, the semiparametric approach struggles to consistently address distributions with shapes more intricate than symmetry. Newcomb (1886) provided the first modern approach to robust parametric estimation by developing a class of estimators that gives "less weight to the more discordant observations" \cite{newcomb1886generalized}. In 1964, Huber \cite{huber1964robust} used the minimax procedure to obtain $M$-estimator for the contaminated normal distribution, which has played a pre-eminent role in the later development of robust statistics. However, as previously demonstrated, under growing asymmetric departures from normality, the bias of the Huber $M$-estimator (HM) increases rapidly. This is a common issue in parametric robust statistics. For example, He and Fung (1999) constructed \cite{he1999method} a robust $M$-estimator (HFM) for the two-parameter Weibull distribution, from which the mean and central moments can be calculated. Nonetheless, it is inadequate for other parametric distributions, e.g., the gamma, Perato, lognormal, and the generalized Gaussian distributions (SI Dataset S1). Another interesting approach is based on $L$-estimators, such as percentile estimators. For examples of percentile estimators for the Weibull distribution, the reader is referred to the works of Menon (1963) \cite{menon1963estimation}, Dubey (1967) \cite{dubey1967some}, Marks (2005) \cite{marks2005estimation}, and Boudt, Caliskan, and Croux (2011) \cite{boudt2011robust}. At the outset of the study of percentile estimators, it was known that they arithmetically utilize the invariant structures of parametric distributions \cite{menon1963estimation,dubey1967some}. An estimator is classified as an $I$-statistic if it asymptotically satisfies $\text{I}\left(\text{LE}_{1},\ldots,\text{LE}_{l}\right)=\left(\theta_1,\ldots,\theta_q\right)$ for the distribution it is consistent, where $\text{LE}$s are calculated with the use of $LU$-statistics (defined in REDS II), $\text{I}$ is defined using arithmetic operations and constants but may also incorporate transcendental functions and quantile functions, and $\theta$s are the population parameters it estimates. In this article, two subclasses of $I$-statistics are introduced, recombined $I$-statistics and quantile $I$-statistics. Based on $LU$-statistics, $I$-statistics are naturally robust. Compared to probability density functions (pdfs) and cumulative distribution functions (cdfs), the quantile functions of many parametric distributions are more elegant. Since the expectation of an $L$-estimator can be expressed as an integral of the quantile function, $I$-statistics are often analytically obtainable. However, it is observed that even when the sample follows a gamma distribution, which belongs to the same larger family as the Weibull model, the generalized gamma distribution, a misassumption can still lead to substantial biases in Marks percentile estimator (MP) for the Weibull distribution \cite{marks2005estimation} (SI Dataset S1). 

The purpose of this paper is to demonstrate that, in light of previous works, by utilizing the invariant structures of unimodal distributions, a suite of robust estimators can be constructed whose biases are typically smaller than the variances (as seen in Table \ref{tab:comparison} for $n=5184$). 

\subsection{Invariant Moments}\label{II} Most popular robust location estimators, such as the symmetric trimmed mean, symmetric Winsorized mean, Hodges-Lehmann estimator, Huber $M$-estimator, and median of means, are symmetric. As shown in REDS I, a symmetric weighted Hodges-Lehmann mean (${\text{SWHLM}}_{k,\epsilon}$) can achieve consistency for the population mean in any symmetric distribution with a finite mean. However, it falls considerably short of consistently handling other parametric distributions that are not symmetric. Shifting from semiparametrics to parametrics, consider a robust estimator with a non-sample-dependent breakdown point (defined in Subsection \ref{1}) which is consistent simultaneously for both a semiparametric distribution and a parametric distribution that does not belong to that semiparametric distribution, it is named with the prefix ‘invariant’ followed by the name of the population parameter it is consistent with. Here, the recombined $I$-statistic is defined as \begin{align*}\text{RI}_{d,h_{\mathbf{k}},\mathbf{k}_1,\mathbf{k}_2,k_1,k_2,\epsilon=\min{(\epsilon_1,\epsilon_2)},\gamma_1,\gamma_2,n,LU_1,LU_2}\coloneqq\\\lim_{c\to\infty}{\left(\frac{\left({LU_1}_{h_{\mathbf{k}},\mathbf{k}_1,k_1,\epsilon_1,\gamma_1,n}+c\right)^{d+1}}{\left({LU_2}_{h_{\mathbf{k}},\mathbf{k}_2,k_2,\epsilon_2,\gamma_2,n}+c\right)^d}-c\right)}\text{,}\end{align*} where $d$ is the key factor for bias correction, $LU_{h_{\mathbf{k}},\mathbf{k},k,\epsilon, \gamma,n}$ is the $LU$-statistic, $\mathbf{k}$ is the degree of the $U$-statistic, $k$ is the degree of the $LL$-statistic, $\epsilon$ is the upper asymptotic breakdown point of the $LU$-statistic. It is assumed in this series that in the subscript of an estimator, if $\mathbf{k}$, $k$ and $\gamma$ are omitted, $\mathbf{k}=1$, $k=1$, $\gamma=1$ are assumed, if just one $\mathbf{k}$ is indicated, $\mathbf{k}_1=\mathbf{k}_2$, if just one $\gamma$ is indicated, $\gamma_1=\gamma_2$, if $n$ is omitted, only the asymptotic behavior is considered, in the absence of subscripts, no assumptions are made. The subsequent theorem shows the significance of a recombined $I$-statistic.

\begin{theorem}\label{rm} Define the recombined mean as ${rm}_{d,k_1,k_2,\epsilon=\min{(\epsilon_1,\epsilon_2)},\gamma_1,\gamma_2,n,\text{WL}_1,\text{WL}_2}\coloneqq\text{RI}_{d,h_{\mathbf{k}}=x,\mathbf{k}_1=1,\mathbf{k}_2=1,k_1,k_2,\epsilon=\min{(\epsilon_1,\epsilon_2)},\gamma_1,\gamma_2,n,LU_1=\text{WL}_1,LU_2=\text{WL}_2}$. Assuming finite means, ${rm}_{d=\frac{\mu-{\text{WL}_1}_{k_1,\epsilon_1,\gamma_1}}{{\text{WL}_1}_{k_1,\epsilon_1,\gamma_1}-{\text{WL}_2}_{k_2,\epsilon_2,\gamma_2}},k_1,k_2,\epsilon=\min{(\epsilon_1,\epsilon_2)},\gamma_1,\gamma_2,\text{WL}_1,\text{WL}_2}$ is a consistent mean estimator for a location-scale distribution, where $\mu$, ${\text{WL}_1}_{k_1,\epsilon_1,\gamma_1}$, and ${\text{WL}_2}_{k_2,\epsilon_2,\gamma_2}$ are different location parameters from that location-scale distribution. If $\gamma_1=\gamma_2=1$, $\text{WL}=\text{SWHLM}$, ${rm}$ is also consistent for any symmetric distributions.\end{theorem}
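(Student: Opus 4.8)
The plan is to strip the deceptively complicated limit defining $\text{RI}$ down to a transparent affine extrapolation and then read consistency off the consistency of the underlying $L$-estimators. Write $a\coloneqq{\text{WL}_1}_{k_1,\epsilon_1,\gamma_1}$ and $b\coloneqq{\text{WL}_2}_{k_2,\epsilon_2,\gamma_2}$ for the (finite) population limits of the two weighted $L$-statistics. Because $h_{\mathbf{k}}=x$ and $\mathbf{k}_1=\mathbf{k}_2=1$, the recombined mean is $rm=\lim_{c\to\infty}\bigl(\frac{(a+c)^{d+1}}{(b+c)^{d}}-c\bigr)$. First I would evaluate this by factoring $c$ out of each base, $\frac{(a+c)^{d+1}}{(b+c)^{d}}=c\,(1+a/c)^{d+1}(1+b/c)^{-d}$, and expanding the two factors by the binomial series in $1/c$; collecting the linear term gives $(1+a/c)^{d+1}(1+b/c)^{-d}=1+\bigl(a+d(a-b)\bigr)/c+O(1/c^{2})$, so that after subtracting $c$ the limit is the closed form
\begin{equation*}
rm=a+d\,(a-b)=\text{WL}_1+d\,(\text{WL}_1-\text{WL}_2).
\end{equation*}
(For large $c$ both bases are positive, so the non-integer powers are well defined.) Substituting the prescribed exponent $d=\frac{\mu-a}{a-b}$ into this form cancels the factor $(a-b)$ and yields $rm=a+(\mu-a)=\mu$ exactly at the population level.

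The second step is to verify that this $d$ is genuinely a single constant of the target family, which is what makes the construction an \emph{invariant} moment. For a location-scale family write $X=\mu_0+\sigma Z$ with $Z$ standardized; every location functional is of the form $\mu_0+\sigma\times(\text{shape constant})$, so $\mu=\mu_0+\sigma c_\mu$, $a=\mu_0+\sigma c_a$, $b=\mu_0+\sigma c_b$ and
\begin{equation*}
d=\frac{\mu-a}{a-b}=\frac{\sigma(c_\mu-c_a)}{\sigma(c_a-c_b)}=\frac{c_\mu-c_a}{c_a-c_b},
\end{equation*}
which is free of both $\mu_0$ and $\sigma$; thus $d$ is fixed and known for the whole family. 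Since ${\text{WL}_1}$ and ${\text{WL}_2}$ are $LU$-statistics consistent for $a$ and $b$ (REDS II) and $rm$ is, by the first step, a fixed continuous (affine) function of the two sample estimators, the continuous mapping theorem gives $rm\to a+d(a-b)=\mu$, proving consistency for the parametric location-scale distribution; the non-sample-dependent breakdown point is inherited from the $LU$-statistics.

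Finally, for the symmetric case I would keep $d$ at the value just calibrated and invoke REDS I: with $\gamma_1=\gamma_2=1$ and $\text{WL}=\text{SWHLM}$, each $\text{SWHLM}$ is consistent for the mean of any symmetric distribution with finite mean, so there $a=b=\mu$, the correction term $d(a-b)$ vanishes for the fixed finite $d$, and $rm=a=\mu$. The same estimator is thereby simultaneously consistent for the location-scale family (through the calibration of $d$) and for every symmetric distribution (through the vanishing of the correction). I expect the main difficulty to be conceptual rather than computational: the formula for $d$ reads $0/0$ on symmetric distributions, so the argument must be ordered carefully---$d$ is first pinned down as a bona fide constant of the asymmetric target family, and only then is the estimator applied to symmetric data, where consistency comes from the factor $(\text{WL}_1-\text{WL}_2)\to0$ rather than from any cancellation inside $d$ itself.
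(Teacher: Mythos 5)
Your proposal is correct and follows essentially the same route as the paper: evaluate the limit to get the affine form $(d+1)\text{WL}_1-d\,\text{WL}_2$, solve for $d$, show $d$ is location- and scale-free by writing each location functional as $\mu_0+\sigma\times(\text{shape constant})$, and handle the symmetric case by the coincidence $\text{SWHLM}_1=\text{SWHLM}_2=\mu$. The only differences are presentational: you supply the binomial-expansion justification of the limit that the paper merely asserts, and you make explicit the $0/0$ caveat (that $d$ must be fixed by the asymmetric calibration family before being applied to symmetric data), which the paper handles implicitly by computing the limit directly with $\text{WL}_1=\text{WL}_2=\mu$.
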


\begin{proof} Finding $d$ that make ${rm}_{d,k_1,k_2,\epsilon=\min{(\epsilon_1,\epsilon_2)},\gamma_1,\gamma_2,\text{WL}_1,\text{WL}_2}$ a consistent mean estimator is equivalent to finding the solution of ${rm}_{d,k_1,k_2,\epsilon=\min{(\epsilon_1,\epsilon_2)},\gamma_1,\gamma_2,\text{WL}_1,\text{WL}_2}=\mu$. First consider the location-scale distribution. Since ${rm}_{d,k_1,k_2,\epsilon=\min{(\epsilon_1,\epsilon_2)},\gamma_1,\gamma_2,\text{WL}_1,\text{WL}_2}=\lim_{c\to\infty}{\left(\frac{\left({\text{WL}_1}_{k_1,\epsilon_1,\gamma_1}+c\right)^{d+1}}{\left({\text{WL}_2}_{k_2,\epsilon_2,\gamma_2}+c\right)^d}-c\right)}=\left(d+1\right){\text{WL}_1}_{k_1,\epsilon_1,\gamma}-d {\text{WL}_2}_{k_2,\epsilon_2,\gamma}=\mu$. So, $d=\frac{\mu-{\text{WL}_1}_{k_1,\epsilon_1,\gamma_1}}{{\text{WL}_1}_{k_1,\epsilon_1,\gamma_1}-{\text{WL}_2}_{k_2,\epsilon_2,\gamma_2}}$. In REDS I, it was established that any $\mathrm{WL}(k,\epsilon,\gamma)$ can be expressed as $\lambda \mathrm{WL}_{0}(k,\epsilon,\gamma)+\mu$ for a location-scale distribution parameterized by a location parameter $\mu$ and a scale parameter $\lambda$, where $\mathrm{WL}_{0}(k,\epsilon,\gamma)$ is a function of $Q_0(p)$, the quantile function of a standard distribution without any shifts or scaling, according to the definition of the weighted $L$-statistic. The simultaneous cancellation of $\mu$ and $\lambda$ in $\frac{(\lambda \mu_0+\mu)-(\lambda {\text{WL}_1}_{0}(k_1,\epsilon_1,\gamma_1)+\mu)}{(\lambda {\text{WL}_1}_{0}(k_1,\epsilon_1,\gamma_1)+\mu)-(\lambda {\text{WL}_2}_{0}(k_2,\epsilon_2,\gamma_2)+\mu)}$ assures that the $d$ in $rm$ is always a constant for a location-scale distribution. The proof of the second assertion follows directly from the coincidence property. According to Theorem 17 in REDS I, for any symmetric distribution with a finite mean, ${\text{SWHLM}_1}_{k_1}={\text{SWHLM}_2}_{k_2}=\mu$. Then ${rm}_{d,k_1,k_2,\epsilon_1,\epsilon_2,\text{SWHLM}_1,\text{SWHLM}_2}=\lim_{c\to\infty}{\left(\frac{\left(\mu+c\right)^{d+1}}{\left(\mu+c\right)^d}-c\right)}=\mu\text{.} $ This completes the demonstration.\end{proof}

For example, the Pareto distribution has a quantile function $Q_{Par}\left(p\right)=x_m(1-p)^{-\frac{1}{\alpha}}$, where $x_m$ is the minimum possible value that a random variable following the Pareto distribution can take, serving a scale parameter, $\alpha$ is a shape parameter. The mean of the Pareto distribution is given by $\frac{\alpha x_m}{\alpha-1}$. As $\mathrm{WL}(k,\epsilon,\gamma)$ can be expressed as a function of $Q(p)$, one can set the two $\text{WL}_{k,\epsilon,\gamma}$s in the $d$ value of $rm$ as two arbitrary quantiles $Q_{Par}(p_1)$ and $Q_{Par}(p_2)$. For the Pareto distribution, $d_{Per,rm}=\frac{\mu_{Per}-Q_{Par}(p_1)}{Q_{Par}(p_1)-Q_{Par}(p_2)}=\frac{\frac{\alpha x_m}{\alpha-1}-x_m(1-p_1)^{-\frac{1}{\alpha}}}{x_m(1-p_1)^{-\frac{1}{\alpha}}-x_m(1-p_2)^{-\frac{1}{\alpha}}}$. $x_m$ can be canceled out. Intriguingly, the quantile function of exponential distribution is $Q_{exp}(p)=\ln \left(\frac{1}{1-p}\right)\lambda$, $\lambda\geq0$. $\mu_{exp}=\lambda$. Then, $d_{exp,rm}=\frac{\mu_{exp}-Q_{exp}(p_1)}{Q_{exp}(p_1)-Q_{exp}(p_2)}=\frac{\lambda-\ln \left(\frac{1}{1-p_1}\right)\lambda}{\ln \left(\frac{1}{1-p_1}\right)\lambda-\ln \left(\frac{1}{1-p_2}\right)\lambda}=-\frac{\ln (1-p_1)+1}{\ln (1-p_1)-\ln (1-p_2)}$. Since $\lim_{\alpha\to\infty}{\frac{\frac{\alpha }{\alpha-1}-(1-p_1)^{-1/\alpha}}{(1-p_1)^{-1/\alpha}-(1-p_2)^{-1/\alpha}}}=-\frac{\ln (1-p_1)+1}{\ln (1-p_1)-\ln (1-p_2)}$, $d_{Per,rm}$ approaches $d_{exp,rm}$, as $\alpha\to\infty$, regardless of the type of weighted $L$-statistic used. That means, for the Weibull, gamma, Pareto, lognormal and generalized Gaussian distribution, ${rm}_{d=\frac{\mu-{\text{SWHLM}_1}_{k_1,\epsilon_1}}{{\text{SWHLM}_1}_{k_1,\epsilon_1}-{\text{SWHLM}_2}_{k_2,\epsilon_2}},k_1,k_2,\epsilon=\min{(\epsilon_1,\epsilon_2)},\text{SWHLM}_1,\text{SWHLM}_2}$ is consistent for at least one particular case, where $\mu$, ${\text{SWHLM}_1}_{k_1,\epsilon_1}$, and ${\text{SWHLM}_2}_{k_2,\epsilon_2}$ are different location parameters from an exponential distribution. Let ${\text{SWHLM}_1}_{k_1,\epsilon_1,\gamma}=\text{BM}_{\nu=3,\epsilon=\frac{1}{24}}$, ${\text{SWHLM}_2}_{k_2,\epsilon_2,\gamma}=m$, then $\mu=\lambda$, $m=Q\left(\frac{1}{2}\right)=\ln{2}\lambda$, $\text{BM}_{\nu=3,\epsilon=\frac{1}{24}}=\lambda \left(1+\ln \left(\frac{26068394603446272 \sqrt[6]{\frac{7}{247}} \sqrt[3]{11}}{391^{5/6}101898752449325 \sqrt{5} }\right)\right)$, the detailed formula is given in the SI Text. So, $d=\frac{\mu-{\mathrm{BM}}_{\nu=3,\epsilon=\frac{1}{24}}}{{\mathrm{BM}}_{\nu=3,\epsilon=\frac{1}{24}}-m}=\frac{\lambda-\lambda \left(1+\ln \left(\frac{26068394603446272 \sqrt[6]{\frac{7}{247}} \sqrt[3]{11}}{391^{5/6}101898752449325 \sqrt{5} }\right)\right)}{\lambda \left(1+\ln \left(\frac{26068394603446272 \sqrt[6]{\frac{7}{247}} \sqrt[3]{11}}{391^{5/6}101898752449325 \sqrt{5} }\right)\right)-\ln{2}\lambda}=-\frac{\ln \left(\frac{26068394603446272 \sqrt[6]{\frac{7}{247}} \sqrt[3]{11}}{391^{5/6}101898752449325 \sqrt{5} }\right)}{1-\ln (2)+\ln \left(\frac{26068394603446272 \sqrt[6]{\frac{7}{247}} \sqrt[3]{11}}{391^{5/6}101898752449325 \sqrt{5} }\right)}\approx0.103$. The biases of ${rm}_{d\approx0.103,\nu=3,\epsilon=\frac{1}{24},\text{BM},m}$ for distributions with skewness between those of the exponential and symmetric distributions are tiny (SI Dataset S1). ${rm}_{d\approx0.103,\nu=3,\epsilon=\frac{1}{24},\text{BM},m}$ exhibits excellent performance for all these common unimodal distributions (SI Dataset S1).

The recombined mean is a recombined $I$-statistic. Consider an $I$-statistic whose $\text{LE}$s are percentiles of a distribution obtained by plugging $LU$-statistics into a cumulative distribution function, $\text{I}$ is defined with arithmetic operations, constants, and quantile functions, such an estimator is classified as a quantile $I$-statistic. One version of the quantile $I$-statistic can be defined as $\text{QI}_{d,h_{\mathbf{k}},\mathbf{k},k,\epsilon,\gamma,,n,LU}\coloneqq\\ \begin{cases} \hat{Q}_{n,h_{\mathbf{k}}}\left(\left(\hat{F}_{n,h_{\mathbf{k}}}\left(LU\right)-\frac{\gamma}{1+\gamma}\right)d+\hat{F}_{n,h_{\mathbf{k}}}\left(LU\right)\right) & \hat{F}_{n,h_{\mathbf{k}}}\left(LU\right)\geq\frac{\gamma}{1+\gamma} \\ \hat{Q}_{n,h_{\mathbf{k}}}\left(\hat{F}_{n,h_{\mathbf{k}}}\left(LU\right)-\left(\frac{\gamma}{1+\gamma}-\hat{F}_{n,h_{\mathbf{k}}}\left(LU\right)\right)d\right) & \hat{F}_{n,h_{\mathbf{k}}}\left(LU\right)<\frac{\gamma}{1+\gamma}, \end{cases}$ where $LU$ is ${LU}_{\mathbf{k},k,\epsilon,\gamma,n}$, $\hat{F}_{n,h_{\mathbf{k}}}\left(x\right)$ is the empirical cumulative distribution function of the $h_{\mathbf{k}}$ kernel distribution, $\hat{Q}_{n,h_{\mathbf{k}}}$ is the quantile function of the $h_{\mathbf{k}}$ kernel distribution.

Similarly, the quantile mean can be defined as $qm_{d,k,\epsilon,\gamma,n,\mathrm{WL}}\coloneqq\text{QI}_{d,h_{\mathbf{k}}=x,\mathbf{k}=1,k,\epsilon,\gamma,n,LU=\mathrm{WL}}$. Moreover, in extreme right-skewed heavy-tailed distributions, if the calculated percentile exceeds $1-\epsilon$, it will be adjusted to $1-\epsilon$. In a left-skewed distribution, if the obtained percentile is smaller than $\gamma\epsilon$, it will also be adjusted to $\gamma\epsilon$. Without loss of generality, in the following discussion, only the case where $\hat{F}_{n}\left(\text{WL}_{k,\epsilon,\gamma,n}\right)\geq\frac{\gamma}{1+\gamma}$ is considered. The most popular method for computing the sample quantile function was proposed by Hyndman and Fan in 1996 \cite{hyndman1996sample}. Another widely used method for calculating the sample quantile function involves employing linear interpolation of modes corresponding to the order statistics of the uniform distribution on the interval [0, 1], i.e., $\hat{Q}_{n}\left(p\right)=X_{\left\lfloor h\right\rfloor}+\left(h-\left\lfloor h\right\rfloor\right)\left(X_{\left\lceil h\right\rceil}-X_{\left\lfloor h\right\rfloor}\right)$, $h=\left(n-1\right)p+1$. To minimize the finite sample bias, here, the inverse function of $\hat{Q}_{n}$ is deduced as $\hat{F}_{n}\left(x\right)\coloneqq \frac{1}{n}\left(\frac{x-X_{cf}}{X_{cf+1}-X_{cf}}+cf\right)$, based on Hyndman and Fan's definition, or $\hat{F}_{n}\left(x\right)\coloneqq \frac{1}{n-1} \left(cf - 1 + \frac{x - X_{cf}}{X_{cf+1} - X_{cf}}\right)$, based on the latter definition, where $cf = \sum_{i=1}^{n} \mathbf{1}_{X_i \le x}$, $\mathbf{1}_A$ is the indicator of event $A$.

The quantile mean uses the location-scale invariant in a different way, as shown in the subsequent proof.

\begin{theorem}\label{qm}${qm}_{d=\frac{F(\mu)-F({\text{WL}}_{k,\epsilon,\gamma})}{F({\text{WL}}_{k,\epsilon,\gamma})-\frac{\gamma}{1+\gamma}},k,\epsilon,\gamma,\mathrm{WL}}$ is a consistent mean estimator for a location-scale distribution provided that the means are finite and $F(\mu)$, $F({\text{WL}}_{k,\epsilon,\gamma})$ and $\frac{\gamma}{1+\gamma}$ are all within the range of $[\gamma\epsilon,1-\epsilon]$, where $\mu$ and ${\text{WL}}_{k,\epsilon,\gamma}$ are location parameters from that location-scale distribution. If $\text{WL}=\text{SWHLM}$, ${qm}$ is also consistent for any symmetric distributions. \end{theorem}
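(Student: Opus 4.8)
The plan is to mirror the structure of the proof of Theorem \ref{rm}: first derive the value of $d$ that enforces asymptotic consistency on a single location-scale distribution, then argue that this same $d$ is forced to be a constant across the whole family, and finally dispatch the symmetric case through the coincidence property. Working under the stated convention $\hat{F}_{n}(\text{WL}_{k,\epsilon,\gamma,n})\geq\frac{\gamma}{1+\gamma}$, I would begin from the first branch of the $\mathrm{QI}$ definition, so that
\[
qm_{d,k,\epsilon,\gamma,n,\mathrm{WL}} = \hat{Q}_{n}\!\left(\left(\hat{F}_{n}(\mathrm{WL})-\tfrac{\gamma}{1+\gamma}\right)d + \hat{F}_{n}(\mathrm{WL})\right).
\]
Letting $n\to\infty$, I would invoke consistency of the $L$-statistic ($\mathrm{WL}_{k,\epsilon,\gamma,n}\to\mathrm{WL}_{k,\epsilon,\gamma}$ from REDS I and II), the Glivenko--Cantelli convergence $\hat{F}_n\to F$, and the convergence of the sample quantile function $\hat{Q}_n\to Q=F^{-1}$ at continuity points, to obtain the limit $Q\big((F(\mathrm{WL})-\tfrac{\gamma}{1+\gamma})d + F(\mathrm{WL})\big)$.

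Setting this limit equal to $\mu$ and applying $F$ to both sides yields the linear equation $(F(\mathrm{WL})-\tfrac{\gamma}{1+\gamma})d + F(\mathrm{WL}) = F(\mu)$, whose unique solution is precisely the $d=\frac{F(\mu)-F(\mathrm{WL})}{F(\mathrm{WL})-\frac{\gamma}{1+\gamma}}$ asserted in the theorem. To confirm that this $d$ is a genuine constant for the location-scale family, I would use the representation established in REDS I, $\mathrm{WL}=\lambda\,\mathrm{WL}_0+\theta$ and $\mu=\lambda\mu_0+\theta$, together with $F(x)=F_0((x-\theta)/\lambda)$; then $F(\mu)=F_0(\mu_0)$ and $F(\mathrm{WL})=F_0(\mathrm{WL}_0)$ depend only on the standardized distribution, so the location shift $\theta$ and the scale $\lambda$ both cancel and $d$ is independent of them, as required.

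For the symmetric case with $\text{WL}=\text{SWHLM}$ (so $\gamma=1$ and $\tfrac{\gamma}{1+\gamma}=\tfrac12$), I would appeal to the coincidence property (Theorem 17 in REDS I): $\text{SWHLM}=\mu$ for every symmetric distribution with finite mean, and since $\mu$ is then the median, $F(\mu)=\tfrac12$. Hence $F(\mathrm{WL})=F(\mu)=\tfrac12=\tfrac{\gamma}{1+\gamma}$, the coefficient multiplying $d$ vanishes, and the target percentile collapses to $\tfrac12$ independently of the numerical value of $d$; thus $qm\to Q(\tfrac12)=\mu$, giving consistency over the entire symmetric class.

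The main obstacle will be justifying that the target percentile actually realizes the value $F(\mu)$ rather than a clamped substitute. Because the underlying $LU$-statistics trim beyond the breakdown level, the evaluation point of $\hat{Q}_n$ is admissible only inside $[\gamma\epsilon,1-\epsilon]$; outside this window the construction replaces it by $\gamma\epsilon$ or $1-\epsilon$ and the clean identity $Q(F(\mu))=\mu$ fails. This is exactly why the hypothesis requires $F(\mu)$, $F(\mathrm{WL})$ and $\tfrac{\gamma}{1+\gamma}$ to all lie in $[\gamma\epsilon,1-\epsilon]$: it guarantees that no clamping occurs and that $\hat{Q}_n$ is evaluated in the region where it consistently estimates $Q$. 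The remaining care is the routine one of ensuring that $F$ is continuous and strictly increasing at the relevant points, so that $Q(F(\cdot))$ acts as the identity and the limit interchange above is legitimate.
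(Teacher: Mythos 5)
Your proposal is correct and follows essentially the same route as the paper's proof: solve the linear equation $\left(F(\mathrm{WL})-\tfrac{\gamma}{1+\gamma}\right)d+F(\mathrm{WL})=F(\mu)$ for $d$, establish that $d$ is constant over the family via $F(x;\lambda,\mu)=F\left(\tfrac{x-\mu}{\lambda};1,0\right)$ so that the percentile of any weighted $L$-statistic is free of location and scale, dispatch the symmetric case by the coincidence property forcing $F(\mathrm{SWHLM})=F(\mu)=\tfrac12$, and note that the hypothesis $F(\mu),F(\mathrm{WL}),\tfrac{\gamma}{1+\gamma}\in[\gamma\epsilon,1-\epsilon]$ exists precisely to rule out the post-adjustment clamping. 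The only difference is that you spell out the finite-sample-to-asymptotic passage (Glivenko--Cantelli and quantile convergence) that the paper leaves implicit, which is added rigor rather than a different argument.
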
\begin{proof}When $F\left(\text{WL}_{k,\epsilon,\gamma}\right)\geq\frac{\gamma}{1+\gamma}$, the solution of $\left(F\left(\text{WL}_{k,\epsilon,\gamma}\right)-\frac{\gamma}{1+\gamma}\right)d+F\left(\text{WL}_{k,\epsilon,\gamma}\right)=F\left(\mu\right)$ is $d=\frac{F(\mu)-F(\text{WL}_{k,\epsilon,\gamma})}{F(\text{WL}_{k,\epsilon,\gamma})-\frac{\gamma}{1+\gamma}}$. The $d$ value for the case where $F\left(\text{WL}_{k,\epsilon,\gamma,n}\right)<\frac{\gamma}{1+\gamma}$ is the same. The definitions of the location and scale parameters are such that they must satisfy $F(x;\lambda,\mu)=F(\frac{x-\mu}{\lambda};1,0)$, then $F(\mathrm{WL}(k,\epsilon,\gamma);\lambda,\mu)=F(\frac{\lambda \mathrm{WL}_{0}(k,\epsilon,\gamma)+\mu-\mu}{\lambda};1,0)=F(\mathrm{WL}_{0}(k,\epsilon,\gamma);1,0)$. It follows that the percentile of any weighted $L$-statistic is free of $\lambda$ and $\mu$ for a location-scale distribution. Therefore $d$ in $qm$ is also invariably a constant. For the symmetric case, $F\left(\text{SWHLM}_{k,\epsilon}\right)=F\left(\mu\right)=F(Q(\frac{1}{2}))=\frac{1}{2}$ is valid for any symmetric distribution with a finite second moment, as the same values correspond to same percentiles. Then, $qm_{d,k,\epsilon,\text{SWHLM}}=F^{-1}\left(\left(F\left(\text{SWHLM}_{k,\epsilon}\right)-\frac{1}{2}\right)d+F\left(\mu\right)\right)=F^{-1}\left(0+F\left(\mu\right)\right)=\mu.$ To avoid inconsistency due to post-adjustment, $F(\mu)$, $F({\text{WL}}_{k,\epsilon,\gamma})$ and $\frac{\gamma}{1+\gamma}$ must reside within the range of $[\gamma\epsilon,1-\epsilon]$. All results are now proven.\end{proof} 

The cdf of the Pareto distribution is $F_{Par}(x)=1-\left(\frac{x_m}{x}\right)^{\alpha}$. So, set the $d$ value in $qm$ with two arbitrary percentiles $p_1$ and $p_2$, $d_{Par,qm}=\frac{1-\left(\frac{x_m}{\frac{\alpha x_m}{\alpha-1}}\right)^\alpha-\left(1-\left(\frac{x_m}{x_m\left(1-p_1\right)^{-\frac{1}{\alpha}}}\right)^\alpha\right)}{\left(1-\left(\frac{x_m}{x_m\left(1-p_1\right)^{-\frac{1}{\alpha}}}\right)^\alpha\right)-\left(1-\left(\frac{x_m}{x_m\left(1-p_2\right)^{-\frac{1}{\alpha}}}\right)^\alpha\right)}=\frac{1-\left(\frac{\alpha-1}{\alpha}\right)^\alpha-p_1}{p_1-p_2}$. The $d$ value in $qm$ for the exponential distribution is always identical to $d_{Par,qm}$ as $\alpha\to\infty$, since $\lim_{\alpha\to\infty}{\left(\frac{\alpha-1}{\alpha}\right)^\alpha}=\frac{1}{e}$ and the cdf of the exponential distribution is $F_{exp}\left(x\right)=1-e^{-\lambda^{-1} x}$, then $d_{exp,qm}=\frac{\left(1-e^{-1}\right)-\left(1-e^{-\ln{\left(\frac{1}{1-p_1}\right)}\ }\right)}{\left(1-e^{-\ln{\left(\frac{1}{1-p_1}\right)}\ }\right)-\left(1-e^{-\ln{\left(\frac{1}{1-p_2}\right)}\ }\right)}=\frac{1-\frac{1}{e}-p_1}{p_1-p_2}$. So, for the Weibull, gamma, Pareto, lognormal and generalized Gaussian distribution, ${qm}_{d=\frac{F_{exp}(\mu)-F_{exp}({\text{SWHLM}}_{k,\epsilon})}{F_{exp}({\text{SWHLM}}_{k,\epsilon})-\frac{1}{2}},k,\epsilon,\text{SWHLM}}$ is also consistent for at least one particular case, provided that $\mu$ and ${\text{SWHLM}}_{k,\epsilon}$ are different location parameters from an exponential distribution and $F(\mu)$, $F({\text{SWHLM}}_{k,\epsilon})$ and $\frac{1}{2}$ are all within the range of $[\epsilon,1-\epsilon]$. Also let ${\text{SWHLM}}_{k,\epsilon,\gamma}=\text{BM}_{\nu=3,\epsilon=\frac{1}{24}}$ and $\mu=\lambda$, then $d=\frac{F_{exp}(\mu)-F_{exp}(\text{BM}_{\nu=3,\epsilon=\frac{1}{24}})}{F_{exp}(\text{BM}_{\nu=3,\epsilon=\frac{1}{24}})-\frac{1}{2}}=\frac{-e^{-1}+e^{-\left(1+\ln \left(\frac{26068394603446272 \sqrt[6]{\frac{7}{247}} \sqrt[3]{11}}{391^{5/6}101898752449325 \sqrt{5} }\right)\right)}}{\frac{1}{2}-e^{-\left(1+\ln \left(\frac{26068394603446272 \sqrt[6]{\frac{7}{247}} \sqrt[3]{11}}{391^{5/6}101898752449325 \sqrt{5} }\right)\right)}}=\frac{\frac{101898752449325 \sqrt{5} \sqrt[6]{\frac{247}{7}} 391^{5/6}}{26068394603446272 \sqrt[3]{11} e}-\frac{1}{e}}{\frac{1}{2}-\frac{101898752449325 \sqrt{5} \sqrt[6]{\frac{247}{7}} 391^{5/6}}{26068394603446272 \sqrt[3]{11} e}}\approx0.088.$ $F_{exp}(\mu)$, $F_{exp}(\text{BM}_{\nu=3,\epsilon=\frac{1}{24}})$ and $\frac{1}{2}$ are all within the range of $[\frac{1}{24},\frac{23}{24}]$. ${qm}_{d\approx0.088,\nu=3,\epsilon=\frac{1}{24},\text{BM}}$ works better in the fat-tail scenarios (SI Dataset S1). Theorem \ref{rm} and \ref{qm} show that ${rm}_{d\approx0.103,\nu=3,\epsilon=\frac{1}{24},\text{BM},m}$ and ${qm}_{d\approx0.088,\nu=3,\epsilon=\frac{1}{24},\text{BM}}$ are both consistent mean estimators for any symmetric distribution and the exponential distribution with finite second moments. It’s obvious that the asymptotic breakdown points of ${rm}_{d\approx0.103,\nu=3,\epsilon=\frac{1}{24},\text{BM},m}$ and ${qm}_{d\approx0.088,\nu=3,\epsilon=\frac{1}{24},\text{BM}}$ are both $\frac{1}{24}$. Therefore they are all invariant means.

To study the impact of the choice of WLs in $rm$ and $qm$, it is constructive to recall that a weighted $L$-statistic is a combination of order statistics. While using a less-biased weighted $L$-statistic can generally enhance performance (SI Dataset S1), there is a greater risk of violation in the semiparametric framework. However, the mean-$\text{WA}_{\epsilon,\gamma}$-$\gamma$-median inequality is robust to slight fluctuations of the QA function of the underlying distribution. Suppose for a right-skewed distribution, the QA function is generally decreasing with respect to $\epsilon$ in $[0,u]$, but increasing in $[u,\frac{1}{1+\gamma}]$, since all quantile averages with breakdown points from $\epsilon$ to $\frac{1}{1+\gamma}$ will be included in the computation of $\text{WA}_{\epsilon,\gamma}$, as long as $\frac{1}{1+\gamma}-u\ll \frac{1}{1+\gamma}-\gamma\epsilon$, and other portions of the QA function satisfy the inequality constraints that define the $\nu$th $\gamma$-orderliness on which the $\text{WA}_{\epsilon,\gamma}$ is based, if $0\leq\gamma\leq1$, the mean-$\text{WA}_{\epsilon,\gamma}$-$\gamma$-median inequality still holds. This is due to the violation of $\nu$th $\gamma$-orderliness being bounded, when $0\leq\gamma\leq1$, as shown in REDS I and therefore cannot be extreme for unimodal distributions with finite second moments. For instance, the SQA function of the Weibull distribution is non-monotonic with respect to $\epsilon$ when the shape parameter $\alpha>\frac{1}{1-\ln (2)}\approx3.259$ as shown in the SI Text of REDS I, the violation of the second and third orderliness starts near this parameter as well, yet the mean-$\text{BM}_{\nu=3,\epsilon=\frac{1}{24}}$-median inequality retains valid when $\alpha\leq3.387$. Another key factor in determining the risk of violation of orderliness is the skewness of the distribution. In REDS I, it was demonstrated that in a family of distributions differing by a skewness-increasing transformation in van Zwet's sense, the violation of orderliness, if it happens, only occurs as the distribution nears symmetry \cite{van1964convex}. When $\gamma=1$, the over-corrections in $rm$ and $qm$ are dependent on the $\text{SWA}_{\epsilon}$-median difference, which can be a reasonable measure of skewness after standardization \cite{bowley1926elements,groeneveld1984measuring}, implying that the over-correction is often tiny with moderate $d$. This qualitative analysis suggests the general reliability of $rm$ and $qm$ based on the mean-$\text{WA}_{\epsilon,\gamma}$-$\gamma$-median inequality, especially for unimodal distributions with finite second moments when $0\leq\gamma\leq1$. Extending this rationale to other weighted $L$-statistics is possible, since the $\gamma$-$U$-orderliness can also be bounded with certain assumptions, as discussed previously.

Consider two continuous distributions belonging to the same location–scale family, according to Theorem 3 in REDS II, their corresponding $\mathbf{k}$th central moment kernel distributions only differ in scaling. Although strict complete $\nu$th orderliness is difficult to prove, following the same logic as discussed above, even if the inequality may be violated in a small range, the mean-$\text{SWA}_{\epsilon}$-median inequality remains valid, in most cases, for the central moment kernel distribution. Define the recombined $\mathbf{k}$th central moment as ${r\mathbf{k}m}_{d,k_1,k_2,\epsilon=\min{(\epsilon_1,\epsilon_2)},\gamma_1,\gamma_2,n,\text{WHL}\mathbf{k}m_1,\text{WHL}\mathbf{k}m_2}\coloneqq\text{RI}_{d,h_{\mathbf{k}}=\psi_\mathbf{k},\mathbf{k}_1=\mathbf{k},\mathbf{k}_2=\mathbf{k},k_1,k_2,\epsilon_1,\epsilon_2,\gamma_1,\gamma_2,n,LU_1=\text{WHL}\mathbf{k}m_1,LU_2=\text{WHL}\mathbf{k}m_2}$. Then, assuming finite $\mathbf{k}$th central moment and applying the same logic as in Theorem \ref{rm}, ${r\mathbf{k}m}_{d=\frac{\mu_\mathbf{k}-{\text{WHL}\mathbf{k}m_1}_{k_1,\epsilon_1,\gamma_1}}{{\text{WHL}\mathbf{k}m_1}_{k_1,\epsilon_1,\gamma_1}-{\text{WHL}\mathbf{k}m_2}_{k_2,\epsilon_2,\gamma_2}},k_1,k_2,\epsilon=\min{(\epsilon_1,\epsilon_2)},\gamma_1,\gamma_2,\text{WHL}\mathbf{k}m_1,\text{WHL}\mathbf{k}m_2}$ is a consistent $\mathbf{k}$th central moment estimator for a location-scale distribution, where $\mu_\mathbf{k}$, ${\text{WHL}\mathbf{k}m_1}_{k_1,\epsilon_1,\gamma_1}$, and ${\text{WHL}\mathbf{k}m_2}_{k_2,\epsilon_2,\gamma_2}$ are different $\mathbf{k}$th central moment parameters from that location-scale distribution. Similarly, the quantile will not change after scaling. The quantile $\mathbf{k}$th central moment is thus defined as \begin{align*}{q\mathbf{k}m}_{d,k,\epsilon,\gamma,n,\text{WHL}\mathbf{k}m}\coloneqq\text{QI}_{d,h_{\mathbf{k}}=\psi_\mathbf{k},\mathbf{k}=\mathbf{k},k,\epsilon,\gamma,n,LU=\text{WHL}\mathbf{k}m}  \text{.}\end{align*} ${q\mathbf{k}m}_{d=\frac{F_{\psi_{\mathbf{k}}}(\mu_\mathbf{k})-F_{\psi_{\mathbf{k}}}({\text{WHL}\mathbf{k}m}_{k,\epsilon,\gamma})}{F_{\psi_{\mathbf{k}}}({\text{WHL}\mathbf{k}m}_{k,\epsilon,\gamma})-\frac{\gamma}{1+\gamma}},k,\epsilon,\gamma,\text{WHL}\mathbf{k}m}$ is also a consistent $\mathbf{k}$th central moment estimator for a location-scale distribution provided that the $\mathbf{k}$th central moment is finite and $F_{\psi_{\mathbf{k}}}(\mu_\mathbf{k})$, $F_{\psi_{\mathbf{k}}}({\text{WHL}\mathbf{k}m}_{k,\epsilon,\gamma})$ and $\frac{\gamma}{1+\gamma}$ are all within the range of $[\gamma\epsilon,1-\epsilon]$, where $\mu_\mathbf{k}$ and ${\text{WHL}\mathbf{k}m}_{k,\epsilon,\gamma}$ are different $\mathbf{k}$th central moment parameters from that location-scale distribution. According to Theorem 2 in REDS II, if the original distribution is unimodal, the central moment kernel distribution is always a heavy-tailed distribution, as the degree term amplifies its skewness and tailedness. From the better performance of the quantile mean in heavy-tailed distributions, the quantile $\mathbf{k}$th central moments are generally better than the recombined $\mathbf{k}$th central moments regarding asymptotic bias.

Finally, the recombined standardized $\mathbf{k}$th moment is defined to be \begin{align*}rs\mathbf{k}m_{\epsilon=\min{(\epsilon_1,\epsilon_2)},k_1,k_2,k_3,k_4,\gamma_1,\gamma_2,\gamma_3,\gamma_4,n,\text{WHL}\mathbf{k}m_1,\text{WHL}\mathbf{k}m_2,\text{WHL}var_1,\text{WHL}var_2}\coloneqq\\ \frac{{r\mathbf{k}m}_{d,k_1,k_2,\epsilon_1,\gamma_1,\gamma_2,n,\text{WHL}\mathbf{k}m_1,\text{WHL}\mathbf{k}m_2}}{({rvar}_{d,k_3,k_4,\epsilon_2,\gamma_3,\gamma_4,n,\text{WHL}var_1,\text{WHL}var_2})^{\mathbf{k}/2}}\text{.}\end{align*} The quantile standardized $\mathbf{k}$th moment is defined similarly, \begin{align*}qs\mathbf{k}m_{\epsilon=\min{(\epsilon_1,\epsilon_2)},k_1,k_2,\gamma_1,\gamma_2,n,\text{WHL}\mathbf{k}m,\text{WHL}var}\coloneqq \frac{{q\mathbf{k}m}_{d,k_1,\epsilon_1,\gamma_1,n,\text{WHL}\mathbf{k}m}}{({qvar}_{d,k_2,\epsilon_2,\gamma_2,n,\text{WHL}var})^{\mathbf{k}/2}}\text{.}\end{align*} 
\subsection{A shape-scale distribution as the consistent distribution}\label{sectionE}In the last section, the parametric robust estimation is limited to a location-scale distribution, with the location parameter often being omitted for simplicity. For improved fit to observed skewness or kurtosis, shape-scale distributions with shape parameter ($\alpha$) and scale parameter ($\lambda$) are commonly utilized. Weibull, gamma, Pareto, lognormal, and generalized Gaussian distributions (when $\mu$ is a constant) are all shape-scale unimodal distributions. Furthermore, if either the shape parameter $\alpha$ or the skewness or kurtosis is constant, the shape-scale distribution is reduced to a location-scale distribution. Let $D(|skewness|,kurtosis,\mathbf{k},etype,dtype,n)=d_{i\mathbf{k}m}$ denote the function to specify $d$ values, where the first input is the absolute value of the skewness, the second input is the kurtosis, the third is the order of the central moment (if $\mathbf{k}=1$, the mean), the fourth is the type of estimator, the fifth is the type of consistent distribution, and the sixth input is the sample size. For simplicity, the last three inputs will be omitted in the following discussion. Hold in awareness that since skewness and kurtosis are interrelated, specifying $d$ values for a shape-scale distribution only requires either skewness or kurtosis, while the other may be also omitted. Since many common shape-scale distributions are always right-skewed (if not, only the right-skewed or left-skewed part is used for calibration, while the other part is omitted), the absolute value of the skewness should be the same as the skewness of these distributions. This setting also handles the left-skew scenario well.

For recombined moments up to the fourth ordinal, the object of using a shape-scale distribution as the consistent distribution is to find solutions for the system of equations $\begin{cases} rm\left(\text{WHLM},\gamma m,D(|rskew|,rkurt,1)\right)=\mu \\ rvar\left(\text{WHL}var,\gamma mvar,D(|rskew|,rkurt,2)\right)=\mu_2 \\ rtm\left(\text{WHL}tm,\gamma mtm,D(|rskew|,rkurt,3)\right)=\mu_3 \\ rfm\left(\text{WHL}fm,\gamma mfm,D(|rskew|,rkurt,4\right)=\mu_4 \\ rskew=\frac{\mu_3}{\mu_2^{\frac{3}{2}}} \\rkurt=\frac{\mu_4}{\mu_2^2} \end{cases}$, where $\mu_2$, $\mu_3$ and $\mu_4$ are the population second, third and fourth central moments. $|rskew|$ and $rkurt$ should be the invariant points of the functions $\varsigma(|rskew|)=\left|\frac{rtm\left(\text{WHL}tm,\gamma mtm,D(|rskew|, 3)\right)}{rvar\left(\text{WHL}var,\gamma mvar,D(|rskew|, 2)\right)^{\frac{3}{2}}}\right |$ and $\varkappa(rkurt)=\frac{rfm\left(\text{WHL}fm,\gamma mfm,D(rkurt, 4)\right)}{rvar\left(\text{WHL}var,\gamma mvar,D(rkurt,2)\right)^2}$. Clearly, this is an overdetermined nonlinear system of equations, given that the skewness and kurtosis are interrelated for a shape-scale distribution. Since an overdetermined system constructed with random coefficients is almost always inconsistent, it is natural to optimize them separately using the fixed-point iteration (see Algorithm \ref{alg:algorithm1}, only $rkurt$ is provided, others are the same).

\begin{algorithm}[H]    
    \caption{$rkurt$ for a shape-scale distribution}\label{alg:algorithm1}
    \begin{algorithmic}[2]
        \Require{$D$; $\text{WHL}var$; $\text{WHL}fm$; $\gamma mvar$; $\gamma mfm$; $maxit$; $\delta$}
        \Ensure{$rkurt_{i-1}$}
        \State $i=0$
        \State $rkurt_{i} \leftarrow$ $\varkappa(kurtosis_{max})$ \Comment{Using the maximum kurtosis available in $D$ as an initial guess.}
        \Repeat
        \State $i=i+1$
        \State $rkurt_{i-1} \leftarrow$ $rkurt_{i}$
        \State $rkurt_{i} \leftarrow$ $\varkappa(rkurt_{i-1})$
        \Until $i>maxit$ or {$|rkurt_{i}-rkurt_{i-1}|<\delta$} \Comment{$maxit$ is the maximum number of iterations, $\delta$ is a small positive number.}
    \end{algorithmic}
\end{algorithm}

The following theorem shows the validity of Algorithm \ref{alg:algorithm1}. \begin{theorem}\label{twoc} Assuming $\gamma=1$ and $m\mathbf{k}m$s, where $2\leq\mathbf{k}\leq4$, are all equal to zero, $|rskew|$ and $rkurt$, defined as the largest attracting fixed points of the functions $\varsigma(|rskew|)$ and $\varkappa(rkurt)$, are consistent estimators of $\Tilde{\mu}_{3}$ and $\Tilde{\mu}_{4}$ for a shape-scale distribution whose $\mathbf{k}$th central moment kernel distributions are $U$-congruent, as long as they are within the domain of $D$, where $\Tilde{\mu}_{3}$ and $\Tilde{\mu}_{4}$ are the population skewness and kurtosis, respectively. \end{theorem}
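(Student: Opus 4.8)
The plan is to work first at the population level, exhibiting $\tilde\mu_3$ and $\tilde\mu_4$ as fixed points of $\varsigma$ and $\varkappa$, then to promote these to the \emph{largest attracting} fixed points so that the iteration of Algorithm \ref{alg:algorithm1} locates them, and finally to pass to the sample level by continuity. For the existence step, I would first use the hypotheses $\gamma=1$ and $m\mathbf{k}m=0$ (for $2\le\mathbf{k}\le4$) to collapse each recombined moment: exactly as in the proof of Theorem \ref{rm}, applied now to the $\mathbf{k}$th central moment kernel distribution, one gets $r\mathbf{k}m=(d+1)\,\text{WHL}\mathbf{k}m_1$, so that $\varsigma$ and $\varkappa$ become explicit ratios of $(d+1)$-scaled WHL-statistics whose only free input is the $d$ value returned by the calibration map $D$.

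The map $D$ is defined so that, evaluated at a given member's true skewness (or kurtosis), it returns the $d$ for which $r\mathbf{k}m$ is consistent for the corresponding central moment; this is the location-scale consistency already established, and the $U$-congruence of the $\mathbf{k}$th central moment kernel distributions is what allows it to be transported uniformly across the shape-scale family rather than re-derived member by member. Consequently, feeding $\tilde\mu_4$ into $\varkappa$ selects the correct $d$, makes $rfm$ and $rvar$ consistent for $\mu_4$ and $\mu_2$, and returns $\mu_4/\mu_2^{2}=\tilde\mu_4$; the identical computation gives $\varsigma(|\tilde\mu_3|)=|\mu_3|/\mu_2^{3/2}=|\tilde\mu_3|$. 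Thus both population moments are fixed points.

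To single them out I would exploit monotonicity: skewness and kurtosis depend monotonically on the shape parameter along the family, and $D$ is monotone in its moment argument, so $\varsigma$ and $\varkappa$ are monotone on the domain of $D$. I would then bound the derivative at the true fixed point below one in modulus, making it attracting, and show that the monotone iterates launched from the largest admissible value (the maximum kurtosis used to seed Algorithm \ref{alg:algorithm1}) descend to it; this identifies $\tilde\mu_4$ as the largest attracting fixed point and discards spurious roots created by the ratio's nonlinearity, and symmetrically for $|\tilde\mu_3|$.

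Consistency then follows from continuity: the WHL-statistics are consistent $LU$-statistics, the recombined moments are continuous in them at fixed $d$, and an attracting fixed point is stable under small perturbations of the defining map, so the sample fixed points converge in probability to the population ones. The hard part will be the attraction argument of the third paragraph—controlling the derivatives of $\varsigma$ and $\varkappa$, equivalently quantifying how the calibrated $d$ and the resulting moment ratios respond to a misspecified shape—since the reduction of the overdetermined system to a scalar fixed-point problem means that existence of the fixed point is easy while attraction and maximality carry the real content of the theorem.
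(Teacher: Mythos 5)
Your skeleton---exhibit the population value as a fixed point, show the largest fixed point is attracting so the seeded iteration finds it, pass to the sample by continuity---matches the paper's intent, and your collapse $r\mathbf{k}m=(d+1)\,\text{WHL}\mathbf{k}m_1$ under $m\mathbf{k}m=0$ is exactly the simplification the paper performs. But the step you defer as ``the hard part'' is the entire content of the paper's proof, and your planned route to it does not close. You propose to get attraction from monotonicity of $D$ and of skewness/kurtosis in the shape parameter and then to ``bound the derivative at the true fixed point below one''; monotonicity alone yields no such bound (a monotone map can cross the diagonal with slope exceeding one), and nothing in your sketch identifies what forces the contraction. The paper's mechanism is an explicit identity: after the collapse and the normalization $var_D=1$,
\[
\frac{\varkappa(rkurt_D)}{rkurt_D}=\frac{\mathrm{SWHL}kurt}{\mathrm{SWHL}kurt_D},
\]
the ratio of the location--scale--invariant SWHL-kurtosis of the sample's underlying distribution to that of the consistent-distribution member indexed by $rkurt_D$. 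This is where $U$-congruence actually enters: it guarantees that $\mathrm{SWHL}fm_D$ increases with $fm_D=rkurt_D\,{var_D}^2$, hence $\mathrm{SWHL}kurt_D$ increases with $rkurt_D$, so $\lim_{rkurt_D\to\infty}\varkappa(rkurt_D)/rkurt_D<1$. That asymptotic ratio, not a pointwise derivative estimate at the true root, is what makes the iterates launched from $kurtosis_{max}$ descend and makes the largest fixed point $fix_{max}$ the attracting one, with $|\partial\varkappa(rkurt_D)/\partial rkurt_D|<1$ on $[fix_{max},kurtosis_{max}]$.

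Two further consequences of missing this identity. First, you invoke $U$-congruence only to ``transport the calibration across the shape-scale family,'' which is not its role here; stripped of its actual use (monotonicity of $\mathrm{SWHL}kurt_D$ in $rkurt_D$), your attraction claim has no support. Second, your existence step shows $\Tilde{\mu}_4$ is \emph{a} fixed point, but the theorem concerns the \emph{largest attracting} fixed point, and you have no bridge excluding spurious larger fixed points; the paper's identity supplies it, since the fixed-point equation $\varkappa(t)=t$ is equivalent to $\mathrm{SWHL}kurt_D(t)=\mathrm{SWHL}kurt$, which under the $U$-congruence monotonicity pins down a unique member of the consistent family and hence a unique admissible $t$. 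Without that reduction, the maximality and attraction claims---which you yourself flag as carrying the real content---remain unproved.
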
 \begin{proof}Without loss of generality, only $rkurt$ is considered, while the logic for $|rskew|$ is the same. Additionally, the second central moments of the underlying sample distribution and consistent distribution are assumed to be 1, with other cases simply multiplying a constant factor according to Theorem 3 in REDS II. From the definition of $D$, $\frac{\varkappa\left(rkurt_{D}\right)}{rkurt_{D}}=\frac{\frac{fm_{D}-\mathrm{SWHL} fm_{D}}{\mathrm{SWHL}fm_{D}-mfm_{D}}\left(\mathrm{SWHL}fm-mfm\right)+\mathrm{SWHL} fm}{rkurt_{D}\left(\frac{var_{D}-\mathrm{SWHL} var_{D}}{\mathrm{SWHL}var_{D}-mvar_{D}}\left(\mathrm{SWHL}var-mvar\right)+\mathrm{SWHL} var\right)^2}$, where the subscript $D$ indicates that the estimates are from the central moment kernel distributions generated from the consistent distribution, while other estimates are from the underlying distribution of the sample.

Then, assuming the $m\mathbf{k}m$s are all equal to zero and $var_{D}=1$, $\frac{\varkappa\left(rkurt_D\right)}{rkurt_D}=\frac{\frac{fm_{D}-\mathrm{SWHL} fm_D}{\mathrm{SWHL}fm_D}\left(\mathrm{SWHL}fm\right)+\mathrm{SWHL} fm}{rkurt_D\left(\frac{\mathrm{SWHL}var}{\mathrm{SWHL}var_D}\right)^2}=\frac{\left(\frac{fm_{D}-\mathrm{SWHL} fm_D}{\mathrm{SWHL}fm_D}+1\right)\left(\mathrm{SWHL}fm\right)}{fm_{D}\left(\frac{\mathrm{SWHL}var}{\mathrm{SWHL}var_D}\right)^2}=\frac{\mathrm{SWHL}fm\mathrm{SWHL}var_D^2}{\mathrm{SWHL}fm_D\mathrm{SWHL} var^2}=\frac{\frac{\mathrm{SWHL} fm}{\mathrm{SWHL}var^2}}{\frac{\mathrm{SWHL}fm_D}{{\mathrm{SWHL}var_D}^2}}=\frac{\mathrm{SWHL}kurt}{\mathrm{SWHL}{kurt}_D}$. Since $\text{SWHL}fm_{D}$ are from the same fourth central moment kernel distribution as $fm_{D}=rkurt_{D}{var_{D}}^2$, according to the definition of $U$-congruence, an increase in $fm_{D}$ will also result in an increase in $\text{SWHL}fm_{D}$. Combining with Theorem 3 in REDS II, $\text{SWHL}kurt$ is a measure of kurtosis that is invariant to location and scale, so $\lim_{rkurt_{D}\rightarrow\infty}{\frac{\varkappa\left(rkurt_{D}\right)}{rkurt_{D}}}<1$. As a result, if there is at least one fixed point, let the largest one be $fix_{max}$, then it is attracting since $|\frac{\partial(\varkappa(rkurt_{D}))}{\partial(rkurt_{D})}|<1$ for all $rkurt_{D}\in [fix_{max},kurtosis_{max}]$, where $kurtosis_{max}$ is the maximum kurtosis available in $D$.

\end{proof}

As a result of Theorem \ref{twoc}, assuming continuity, $m\mathbf{k}m$s are all equal to zero, and $U$-congruence of the central moment kernel distributions, Algorithm \ref{alg:algorithm1} converges surely provided that a fixed point exists within the domain of $D$. At this stage, $D$ can only be approximated through a Monte Carlo study. The continuity of $D$ can be ensured by using linear interpolation. One common encountered problem is that the domain of $D$ depends on both the consistent distribution and the Monte Carlo study, so the iteration may halt at the boundary if the fixed point is not within the domain. However, by setting a proper maximum number of iterations, the algorithm can return the optimal boundary value. For quantile moments, the logic is similar, if the percentiles do not exceed the breakdown point. If this is the case, consistent estimation is impossible, and the algorithm will stop due to the maximum number of iterations. The fixed point iteration is, in principle, similar to the iterative reweighing in Huber $M$-estimator, but an advantage of this algorithm is that it is solely related to the inputs in Algorithm \ref{alg:algorithm1} and is independent of the sample size. Since they are consistent for a shape-scale distribution, $|rskew|$ can specify $d_{rm}$ and $d_{tm}$, $rkurt$ can specify $d_{rvar}$ and $d_{rfm}$. Algorithm \ref{alg:algorithm1} enables the robust estimations of all four moments to reach a near-consistent level for common unimodal distributions (Table \ref{tab:comparison}, SI Dataset S1), just using the Weibull distribution as the consistent distribution.

\subsection{Root mean square error }\label{2}
The SSEs of all robust estimators proposed here are often, although many exceptions exist, between those of the sample median and those of the sample mean or median central moments and $U$-central moments (SI Dataset S2). This is because similar monotonic relations between breakdown point and variance are also very common, e.g., Bickel and Lehmann \cite{bickel2012descriptive2} proved that a lower bound for the efficiency of $\text{TM}_{\epsilon}$ to sample mean is $(1-2\epsilon)^2$ and this monotonic bound holds true for any distribution. However, the direction of monotonicity differs for distributions with different kurtosis. Lehmann and Scheffé (1950, 1955) \cite{lehmann2011completeness,lehmann2012completeness} in their two early papers provided a way to construct a uniformly minimum-variance unbiased estimator (UMVUE). From that, the sample mean and unbiased sample second moment can be proven as the UMVUEs for the population mean and population second moment for the Gaussian distribution. While their performance for sub-Gaussian distributions is generally satisfied, they perform poorly when the distribution has a heavy tail and completely fail for distributions with infinite second moments. For sub-Gaussian distributions, the variance of a robust location estimator is generally monotonic increasing as its robustness increases, but for heavy-tailed distributions, the relation is reversed. So, unlike bias, the variance-optimal choice can be very different for distributions with different kurtosis.

In 1983, Lai, Robbins, and Yu proposed an estimator that adaptively chooses the mean or median in a symmetric distribution and showed that the choice is typically as good as the better of the sample mean and median regarding variance \cite{lai1983adaptive}. Another approach which can be dated back to Laplace (1812) \cite{laplace_1812} is using $w\bar{x}+(1-w)m_n$ as a location estimator and $w$ is deduced to achieve optimal variance. Inspired by Lai et al's approach \cite{lai1983adaptive}, in this study, for $rkurt$, there are 364 combinations based on 14 SWHL$fm$s and 26 SWHL$var$s (SI Text). Each combination has a root mean square error (RMSE) for a single-parameter distribution, which can be inferred using a Monte Carlo study. For $qkurt$, there are another 364 combinations, but if the percentiles of quantile moments exceed the breakdown point, that combination is excluded. Then, the combination with the smallest RMSE, calibrated by a two-parameter distribution, is chosen. Similar to Subsection \ref{sectionE}, let $I(kurtosis,dtype,n)=ikurt_{\text{WHL}fm,\text{WHL}var}$ represent these relationships. In this article, the breakdown points of the SWHLMs in $\text{SWHL}\mathbf{k}m$ were adjusted to ensure the overall breakdown points were $\frac{1}{24}$, as detailed in Theorem \ref{bdp}). There are two approaches to determine $ikurt$. The first one is computing all 364+364 $rkurt$ and $qkurt$, and then, since $\lim_{ikurt\to\infty}{\frac{I(ikurt)}{ikurt}}<1$, the same fix point iteration algorithm as Algorithm \ref{alg:algorithm1} can be used to choose the RMSE-optimum combination. The only difference is that unlike $D$, $I$ is defined to be discontinuous but linear interpolation can also ensure continuity. The second approach is shown in SI Algorithm 2. The RMSEs of these $ikurt$ from the two approaches can be further determined by a Monte Carlo study. Algorithm \ref{alg:algorithm1} can also be used to determine the optimum choice among the two approaches. The 364+364 $rkurt$ and $qkurt$ can form a vector, $Vkurt$, where the $Q_{Vkurt}(\frac{1}{5})$ to $Q_{Vkurt}(\frac{4}{5})$ can be used to determine the $d$ values of $r\mathbf{k}m$s and $q\mathbf{k}m$s. The RMSEs of those $r\mathbf{k}m$s and $q\mathbf{k}m$s can also be estimated by a Monte Carlo study and the estimator with the smallest RMSE of each ordinal is named as $i\mathbf{k}m$. When $\mathbf{k}$ is even, the $ikurt$ determined by $Ism$ (detailed in the SI Text) is used to determine $i\mathbf{k}m$. This approach yields results that are often nearly optimal (SI Dateset S1). The estimations of skewness and $i\mathbf{k}m$, when $\mathbf{k}$ is odd, follow the same logic. 

Due to combinatorial explosion, the bootstrap \cite{efron1979bootstrap}, introduced by Efron in 1979, is indispensable for computing central moments in practice. In 1981, Bickel and Freedman \cite{bickel1981some} showed that the bootstrap is asymptotically valid to approximate the original distribution in a wide range of situations, including $U$-statistics. The limit laws of bootstrapped trimmed $U$-statistics were proven by Helmers, Janssen, and Veraverbeke (1990) \cite{helmers1990bootstrapping}. In REDS I, the advantages of quasi-bootstrap were discussed \cite{richtmyer1958non,sobol1967distribution,Do1991QuasirandomRF}. By using quasi-sampling, the impact of the number of repetitions of the bootstrap, or bootstrap size, on variance is very small (SI Dataset S3). An estimator based on the quasi-bootstrap approach can be seen as a complex deterministic estimator that is not only computationally efficient but also statistical efficient. The only drawback of quasi-bootstrap compared to non-bootstrap is that a small bootstrap size can produce additional finite sample bias (SI Text). In general, the variances of invariant central moments are much smaller than those of corresponding unbiased sample central moments (deduced by Cramér \cite{cramer1999mathematical,gerlovina2019computer}), except that of the corresponding second central moment (Table \ref{tab:comparison}).

\begin{table*}
\centering
\caption{Evaluation of invariant moments for five common unimodal distributions in comparison with current popular methods}
\begin{tabular}{|l|l|l|l|l|l|l|l|l|l|l|l|l|l|l|}
\hline
Errors & $\Bar{x}$ & $\text{TM}$ & H-L&SM & $\text{HM}$& WM &SQM& BM& MoM &  MoRM & $m$HLM  & $rm_{exp,\text{BM}}$  & $qm_{exp,\text{BM}}$  \\ \hline
\midrule

$\text{WASAB}$ & 0.000 & 0.107 & 0.088 & 0.078 & 0.078 & 0.066 & 0.048 & 0.048 & 0.034 & 0.035 & 0.034 & 0.002 & 0.003 \\ \hline
$\text{WRMSE}$ & 0.014 & 0.111 & 0.092 & 0.083 & 0.083 & 0.070 & 0.053 & 0.053 & 0.041 & 0.041 & 0.038 & 0.017 & 0.018 \\ \hline
$\text{WASB}_{n=5184}$ & 0.000 & 0.108 & 0.089 & 0.078 & 0.079 & 0.066 & 0.048 & 0.048 & 0.034 & 0.036 & 0.033 & 0.002 & 0.003 \\ \hline
$\text{WSE} \lor \text{WSSE}$ & 0.014 & 0.014 & 0.014 & 0.015 & 0.014 & 0.014 & 0.014 & 0.015 & 0.017 & 0.014 & 0.014 & 0.017 & 0.017 \\ \hline

\bottomrule
\end{tabular}
\begin{tabular}{|l|l|l|l|l|l|l|l|l|l|l|l|l|l|}
\hline
Errors &  HFM$_{\mu}$ & MP$_{\mu}$& $rm$ & $qm$  & $im$  &  $var$  & $var_{bs}$& T$sd^2$ & HFM$_{\mu_2}$ & MP$_{\mu_2}$ & $rvar$ & $qvar$  & $ivar$   \\ \hline
\midrule
$\text{WASAB}$ & 0.037 & 0.043 & 0.001 & 0.002 & 0.001 & 0.000 & 0.000 & 0.200 & 0.027 & 0.042 & 0.005 & 0.018 & 0.003 \\ \hline
$\text{WRMSE}$  & 0.049 & 0.055 & 0.015 & 0.015 & 0.014 & 0.017 & 0.017 & 0.198 & 0.042 & 0.062 & 0.019 & 0.026 & 0.019 \\ \hline
$\text{WASB}_{n=5184}$  & 0.038 & 0.043 & 0.001 & 0.002 & 0.001 & 0.000 & 0.001 & 0.198 & 0.027 & 0.043 & 0.005 & 0.018 & 0.003 \\ \hline
$\text{WSE} \lor \text{WSSE}$  & 0.018 & 0.021 & 0.015 & 0.015 & 0.014 & 0.017 & 0.017 & 0.015 & 0.024 & 0.032 & 0.018 & 0.017 & 0.018\\ \hline
\bottomrule
\end{tabular}

\begin{tabular}{|l|l|l|l|l|l|l|l|l|l|l|l|l|l|l|}
\hline
Errors &  $tm$ & $tm_{bs}$ &HFM$_{\mu_3}$ & MP$_{\mu_3}$ & $rtm$ & $qtm$  & $itm$ & $fm$ &$fm_{bs}$ & HFM$_{\mu_4}$ & MP$_{\mu_4}$& $rfm$& $qfm$& $ifm$ \\ \hline
\midrule

$\text{WASAB}$ & 0.000 & 0.000 & 0.052 & 0.059 & 0.006 & 0.083 & 0.034 & 0.000 & 0.000 & 0.037 & 0.046 & 0.024 & 0.038 & 0.011 \\ \hline
$\text{WRMSE}$ & 0.019 & 0.018 & 0.063 & 0.074 & 0.018 & 0.083 & 0.044 & 0.026 & 0.023 & 0.049 & 0.062 & 0.037 & 0.043 & 0.029 \\ \hline
$\text{WASB}_{n=5184}$ & 0.001 & 0.003 & 0.052 & 0.059 & 0.007 & 0.082 & 0.038 & 0.001 & 0.009 & 0.037 & 0.047 & 0.024 & 0.036 & 0.013 \\ \hline
$\text{WSE} \lor \text{WSSE}$ & 0.019 & 0.018 & 0.021 & 0.091 & 0.015 & 0.012 & 0.017 & 0.024 & 0.021 & 0.020 & 0.027 & 0.021 & 0.020 & 0.022\\ \hline
\bottomrule
\end{tabular}
\label{tab:comparison}
\begin{minipage}{1\linewidth}
\footnotesize The first table presents the use of the exponential distribution as the consistent distribution for five common unimodal distributions: Weibull, gamma, Pareto, lognormal, and generalized Gaussian distributions. Popular robust mean estimators discussed in REDS 1 were used as comparisons. The breakdown points of mean estimators in the first table, besides H-L estimator and Huber $M$-estimator, are all ${\frac{1}{8}}$. The second and third tables present the use of the Weibull distribution as the consistent distribution not plus/plus using the lognormal distribution for the odd ordinal moments optimization and the generalized Gaussian distribution for the even ordinal moments optimization. SQM is the robust mean estimator used in recombined/quantile moments. Unbiased sample central moments ($var$, $tm$, $fm$), $U$-central moments with quasi-bootstrap ($var_{bs}$, $tm_{bs}$, $fm_{bs}$), and other estimators were used as comparisons. The generalized Gaussian distribution was excluded for He and Fung $M$-Estimator and Marks percentile estimator, since the logarithmic function does not produce results for negative inputs. The breakdown points of estimators in the second and third table, besides $M$-estimators and percentile estimator, are all ${\frac{1}{24}}$. The tables include the average standardized asymptotic bias (ASAB, as $n\rightarrow\infty$), root mean square error (RMSE, at $n=5184$), average standardized bias (ASB, at $n=5184$) and variance (SE $\lor$ SSE, at $n=5184$) of these estimators, all reported in the units of the standard deviations of the distribution or corresponding kernel distributions. W means that the results were weighted by the number of Google Scholar search results on May 30, 2022 (including synonyms). The calibrations of $d$ values and the computations of ASAB, ASB, and SSE were described in Subsection \ref{2}, \ref{1} and SI Methods. Detailed results and related codes are available in SI Dataset S1 and \href {https://zenodo.org/records/10674322} {Zenodo}.
\end{minipage}
\end{table*}

\subsection{Robustness}\label{1} 
The measure of robustness to gross errors used in this series is the breakdown point proposed by Hampel \cite{hampel1968contributions} in 1968. In REDS I, it has shown that the median of means (MoM) is asymptotically equivalent to the median Hodge-Lehmann mean. Therefore it is also biased for any asymmetric distribution. However, the concentration bound of MoM depends on $\sqrt{\frac{1}{n}}$ \cite{devroye2016sub}, it is quite natural to deduce that it is a consistent robust estimator. The concept, sample-dependent breakdown point, is defined to avoid ambiguity. 
\begin{definition}[Sample-dependent breakdown point]\label{ufbp}The breakdown point of an estimator $\hat{\theta}$ is called sample-dependent if and only if the upper and lower asymptotic breakdown points, which are the upper and lower breakdown points when $n\rightarrow\infty$, are zero and the empirical influence function of $\hat{\theta}$ is bounded. For a full formal definition of the empirical influence function, the reader is referred to Devlin, Gnanadesikan and Kettenring (1975)'s paper \cite{devlin1975robust}.

\end{definition}%
Bear in mind that it differs from the "infinitesimal robustness" defined by Hampel, which is related to whether the asymptotic influence function is bounded \cite{hampel1971general,hampel1974influence,rousseeuw2011robust}. The proof of the consistency of MoM assumes that it is an estimator with a sample-dependent breakdown point since its breakdown point is $\frac{b}{2n}$, where $b$ is the number of blocks, then $\lim_{n\rightarrow\infty}{\left(\frac{b}{2n}\right)}=0$, if $b$ is a constant and any changes in any one of the points of the sample cannot break down this estimator. 

For the $LU$-statistics, the asymptotic upper breakdown points are suggested by the following theorem, which extends the method in Donoho and Huber (1983)'s proof of the breakdown point of the Hodges-Lehmann estimator \cite{donoho1983notion}. The proof is given in the SI Text.\begin{theorem}\label{bdp}Given a U-statistic associated with a symmetric kernel of degree $\mathbf{k}$. Then, assuming that as $n\rightarrow\infty$, $\mathbf{k}$ is a constant, the upper breakdown point of the $LU$-statistic is  $1-\left(1-\epsilon_\mathbf{0}\right)^\frac{1}{\mathbf{k}}$, where $\epsilon_\mathbf{0}$ is the upper breakdown point of the corresponding $LL$-statistic.\end{theorem}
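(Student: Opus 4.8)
The plan is to reduce the breakdown of the $LU$-statistic to a counting problem on the induced kernel distribution and then pass to the limit $n\to\infty$, following the Donoho--Huber template for the Hodges--Lehmann estimator. Recall that an $LU$-statistic first forms the $\binom{n}{\mathbf{k}}$ kernel evaluations $h(X_{i_1},\dots,X_{i_{\mathbf{k}}})$ over all $\mathbf{k}$-subsets of the sample, and then applies the $LL$-statistic (a weighted $L$-statistic) to these values. The upper breakdown point $\epsilon_{\mathbf{0}}$ of the $LL$-statistic is, by definition, the smallest fraction of the largest kernel values that, once sent to $+\infty$, forces the $LL$-statistic to $+\infty$. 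The whole argument therefore hinges on translating a contamination fraction $\epsilon$ of the \emph{sample} into the corresponding fraction of corrupted \emph{kernel values}.

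First I would fix $m=\lfloor\epsilon n\rfloor$ contaminated sample points and count: a kernel evaluation is clean precisely when all $\mathbf{k}$ of its arguments are drawn from the $n-m$ uncontaminated points, giving $\binom{n-m}{\mathbf{k}}$ clean evaluations and $\binom{n}{\mathbf{k}}-\binom{n-m}{\mathbf{k}}$ corrupted ones. Next I would observe that, by driving every contaminated point to $+\infty$, each corrupted kernel evaluation can be made arbitrarily large (here I use that the kernels considered are dominated by a single large argument, the same monotonicity exploited by Donoho and Huber); hence the corrupted values occupy exactly the top $\binom{n}{\mathbf{k}}-\binom{n-m}{\mathbf{k}}$ order statistics of the kernel distribution. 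The $LL$-statistic then breaks down if and only if this corrupted fraction exceeds its own upper breakdown point, i.e.\ $1-\binom{n-m}{\mathbf{k}}/\binom{n}{\mathbf{k}}>\epsilon_{\mathbf{0}}$.

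The final step is the asymptotic evaluation of the ratio. Since $\mathbf{k}$ is held constant, $\binom{n-m}{\mathbf{k}}/\binom{n}{\mathbf{k}}=\prod_{j=0}^{\mathbf{k}-1}\frac{n-m-j}{n-j}\to(1-\epsilon)^{\mathbf{k}}$ as $n\to\infty$ with $m/n\to\epsilon$, so the corrupted fraction tends to $1-(1-\epsilon)^{\mathbf{k}}$. Setting the breakdown threshold $1-(1-\epsilon)^{\mathbf{k}}=\epsilon_{\mathbf{0}}$ and solving yields $\epsilon=1-(1-\epsilon_{\mathbf{0}})^{1/\mathbf{k}}$, the claimed value.

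I expect the main obstacle to be the matching lower bound, that is, showing this $\epsilon$ is genuinely the infimum rather than merely a fraction that suffices to break the estimator. For necessity I would argue that whenever $\epsilon<1-(1-\epsilon_{\mathbf{0}})^{1/\mathbf{k}}$ the clean kernel values, which stay bounded as long as the good data is bounded, constitute more than a $1-\epsilon_{\mathbf{0}}$ share of the kernel distribution and therefore always straddle the $LL$-statistic's weighted position inside its unbreakable core, keeping it bounded no matter where the contaminants are placed. A secondary technical point is the dependence among kernel values: contaminated points are shared across many subsets, but since the argument needs only the \emph{count} of corrupted evaluations and their placement in the upper tail, this dependence affects neither the clean/corrupted partition nor the limiting ratio and can be absorbed into the same counting bookkeeping.
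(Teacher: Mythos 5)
Your proposal is correct and follows essentially the same route as the paper, which explicitly builds its proof by extending Donoho and Huber's counting argument for the Hodges--Lehmann estimator: contaminating $m=\lfloor\epsilon n\rfloor$ sample points corrupts $\binom{n}{\mathbf{k}}-\binom{n-m}{\mathbf{k}}$ of the kernel evaluations, the corrupted fraction tends to $1-(1-\epsilon)^{\mathbf{k}}$ for fixed $\mathbf{k}$, and equating this with $\epsilon_{\mathbf{0}}$ gives the stated value. Your explicit flagging of the monotonicity assumption on the kernel (that a single divergent argument drives the kernel value to $+\infty$) and of the matching lower bound are the right points to be careful about, and both are handled as the template requires.
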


\begin{remark}If $\mathbf{k}=1$, ${1-\left(1-\epsilon_\mathbf{0}\right)}^\frac{1}{\mathbf{k}}=\epsilon_\mathbf{0}$, so this formula also holds for the $LL$-statistic itself. Here, to ensure the breakdown points of all four moments are the same, $\frac{1}{24}$, since $\epsilon_\mathbf{0}={1-\left(1-\epsilon\right)}^\mathbf{k}$, the breakdown points of all $LU$-statistics for the second, third, and fourth central moment estimations are adjusted as $\epsilon_\mathbf{0}=\frac{47}{576}$, $\frac{1657}{13824}$, $\frac{51935}{331776}$, respectively.
\end{remark}

Every statistic is based on certain assumptions. For instance, the sample mean assumes that the second moment of the underlying distribution is finite. If this assumption is violated, the variance of the sample mean becomes infinitely large, even if the population mean is finite. As a result, the sample mean not only has zero robustness to gross errors, but also has zero robustness to departures. To meaningfully compare the performance of estimators under departures from assumptions, it is necessary to impose constraints on these departures. Bound analysis \cite{gauss1823theoria} is the first approach to study the robustness to departures, i.e., although all estimators can be biased under departures from the corresponding assumptions, but their standardized maximum deviations can differ substantially \cite{bieniek2016comparison,danielak2003theory,devroye2016sub,li2018worst,bernard2020range,mathieu2022concentration}. In REDS I, it is shown that another way to qualitatively compare the estimators' robustness to departures from the symmetry assumption is constructing and comparing corresponding semiparametric models. While such comparison is limited to a semiparametric model and is not universal, it is still valid for a wide range of parametric distributions. Bound analysis is a more universal approach since they can be deduced by just assuming regularity conditions \cite{bieniek2016comparison,danielak2003theory,devroye2016sub,li2018worst,mathieu2022concentration}. However, bounds are often hard to deduce for complex estimators. Also, sometimes there are discrepancies between maximum bias and average bias. Since the estimators proposed here are all consistent under certain assumptions, measuring their biases is also a convenient way of measuring the robustness to departures. Average standardized asymptotic bias is thus defined as follows.
\begin{definition}[Average standardized asymptotic bias] For a single-parameter distribution, the average standardized asymptotic bias (ASAB) is given by $\frac{\left|\hat{\theta}-\theta\right|}{\sigma}$, where $\hat{\theta}$ represents the estimation of $\theta$, and $\sigma$ denotes the standard deviation of the kernel distribution associated with the $LU$-statistic. If the estimator $\hat{\theta}$ is not classified as an $\text{RI}$-statistic, $\text{QI}$-statistic, or $LU$-statistic, the corresponding $U$-statistic, which measures the same attribute of the distribution, is utilized to determine the value of $\sigma$. For a two-parameter distribution, the first step is setting the lower bound of the kurtosis range of interest $\Tilde{\mu}_{4_{l}}$, the spacing $\delta$, and the bin count $C$. Then, the average standardized asymptotic bias is defined as \begin{align*}\text{ASAB}_{\hat{\theta}}\coloneqq\frac{1}{C}\quad\sum_{
    \mathclap{
        \substack{
            \delta + \Tilde{\mu}_{4_{l}} \le \Tilde{\mu}_{4}\le C \delta + \Tilde{\mu}_{4_{l}}\\
            \Tilde{\mu}_{4} \textup{ is a multiple of }\delta
        }
    }
} E_{\hat{\theta}|\Tilde{\mu}_{4}}\left[\frac{\left|\hat{\theta}-\theta\right|}{\sigma}\right]
\end{align*}
where $\Tilde{\mu}_{4}$ is the kurtosis specifying the two-parameter distribution, $E_{\hat{\theta}|\Tilde{\mu}_{4}}$ denotes the expected value given fixed $\Tilde{\mu}_4$.
\end{definition}

Standardization plays a crucial role in comparing the performance of estimators across different distributions. Currently, several options are available, such as using the root mean square deviation from the mode (as in Gauss \cite{gauss1823theoria}), the mean absolute deviation, or the standard deviation. The standard deviation is preferred due to its central role in standard error estimation. In Table \ref{tab:comparison}, $\delta=0.1$, $C=70$. For the Weibull, gamma, lognormal and generalized Gaussian distributions, $\Tilde{\mu}_{4_{l}}=3$ (there are two shape parameter solutions for the Weibull distribution, the lower one is used here). For the Pareto distribution, $\Tilde{\mu}_{4_{l}}=9$. To provide a more practical and straightforward illustration, all results from five distributions are further weighted by the number of Google Scholar search results. Within the range of kurtosis setting, nearly all WLs and WHL$\mathbf{k}m$s proposed here reach or at least come close to their maximum biases (SI Dataset S4). The pseudo-maximum bias is thus defined as the maximum value of the biases within the range of kurtosis setting for all five unimodal distributions. In most cases, the pseudo-maximum biases of invariant moments occur in lognormal or generalized Gaussian distributions (SI Dataset S4), since besides unimodality, the Weibull distribution differs entirely from them. Interestingly, the asymptotic biases of $\text{TM}_{\epsilon=\frac{1}{8}}$ and $\text{WM}_{\epsilon=\frac{1}{8}}$, after averaging and weighting, are 0.107$\sigma$ and 0.066$\sigma$, respectively, in line with the sharp bias bounds of $\text{TM}_{2,14:15}$ and $\text{WM}_{2,14:15}$ (a different subscript is used to indicate a sample size of 15, with the removal of the first and last order statistics), 0.173$\sigma$ and 0.126$\sigma$, for any distributions with finite moments \cite{bieniek2016comparison,danielak2003theory}.

\section*{Discussion}
Statistics, encompassing the collection, analysis, interpretation, and presentation of data, has evolved over time, with various approaches emerging to meet challenges in practice. Among these approaches, the use of probability models and measures of random variables for data analysis is often considered the core of statistics. While the early development of statistics was focused on parametric methods, there were two main approaches to point estimation. The Gauss–Markov theorem \cite{gauss1823theoria,markov1912} states the principle of minimum variance unbiased estimation which was further enriched by Neyman (1934) \cite{neyman1934two}, Rao (1945) \cite{radhakrishna1945information}, Blackwell (1947) \cite{blackwell1947conditional}, and Lehmann and Scheffé (1950, 1955) \cite{lehmann2011completeness,lehmann2012completeness}. Maximum likelihood was first introduced by Fisher in 1922 \cite{fisher1922mathematical} in a multinomial model and later generalized by Cramér (1946), Hájek (1970), and Le Cam (1972) \cite{cramer1999mathematical,lecam1970assumptions,hajek1972local}. In 1939, Wald \cite{wald1939contributions} combined these two principles and suggested the use of minimax estimates, which involve choosing an estimator that minimizes the maximum possible loss. Following Huber's seminal work \cite{huber1964robust}, $M$-statistics have dominated the field of parametric robust statistics for over half a century. Nonparametric methods, e.g., the Kolmogorov–Smirnov test, Mann-Whitney-Wilcoxon Test, and Hoeffding's independence test, emerged as popular alternatives to parametric methods in 1950s, as they do not make specific assumptions about the underlying distribution of the data. In 1963, Hodges and Lehmann proposed a class of robust location estimators based on the confidence bounds of rank tests \cite{hodges1963estimates}. In REDS I, when compared to other semiparametric mean estimators with the same breakdown point, the H-L estimator was shown to be the bias-optimal choice, which aligns Devroye, and Lerasle, Lugosi, and Oliveira's conclusion that the median of means is near-optimal in terms of concentration bounds \cite{devroye2016sub} as discussed. The formal study of semiparametric models was initiated by Stein \cite{stein1956efficient} in 1956. Bickel, in 1982, simplified the general heuristic necessary condition proposed by Stein \cite{stein1956efficient} and derived sufficient conditions for this type of problem, adaptive estimation \cite{bickel1982adaptive}. These conditions were subsequently applied to the construction of adaptive estimates \cite{bickel1982adaptive}. It has become increasingly apparent that, in robust statistics, many estimators previously called "nonparametric" are essentially semiparametric as they are partly, though not fully, characterized by some interpretable Euclidean parameters. This approach is particularly useful in situations where the data do not conform to a simple parametric distribution but still have some structure that can be exploited. In 1984, Bickel addressed the challenge of robustly estimating the parameters of a linear model while acknowledging the possibility that the model may be invalid but still within the confines of a larger model \cite{bickel1984parametric}. He showed by carefully designing the estimators, the biases can be very small. The paradigm shift here opens up the possibility that by defining a large semiparametric model and constructing estimators simultaneously for two or more very different semiparametric/parametric models within the large semiparametric model, then even for a parametric model belongs to the large semiparametric model but not to the semiparametric/parametric models used for calibration, the performance of these estimators might still be near-optimal due to the common nature shared by the models used by the estimators. Maybe it can be named as comparametrics. Closely related topics are "mixture model" and "constraint defined model," which were generalized in Bickel, Klaassen, Ritov, and Wellner's classic semiparametric textbook (1993) \cite{bickel1993efficient} and the method of sieves, introduced by Grenander in 1981 \cite{Grenander_1981}. As the building blocks of statistics, invariant moments can reduce the overall errors of statistical results across studies and thus can enhance the replicability of the whole community \cite{leek2015reproducible,national2019reproducibility}.

\matmethods{Methods of generating the Table \ref{tab:comparison} are summarized below, with details in the SI Text. The $d$ values for the invariant moments of the Weibull distribution were approximated using a Monte Carlo study, with the formulae presented in Theorem \ref{rm} and \ref{qm}. The computation of $I$ functions is summarized in Subsection \ref{2} and further explained in the SI Text. The computation of ASABs and ASBs is described in Subsection \ref{1}. The SEs and SSEs were computed by approximating the sampling distribution using 1000 pseudorandom samples for $n=5184$ and 50 pseudorandom samples for $n=2654208$. The impact of the bootstrap size, ranging from $n=2.7\times 10^{2}$ to $n=2.765\times 10^{4}$, on the variance of invariant moments and $U$-central moments was studied using the SEs and SSEs methods described above. A brute force approach was used to estimate the maximum biases of the robust estimators discussed for the five unimodal distributions. The validity of this approach is discussed in the SI Text.


}

\showmatmethods{} 

\vspace*{-3pt}
\subsection*{Data and Software Availability} Data for Table \ref{tab:comparison} are given in SI Dataset S1-S4. The SI Datasets were deposited in \href {https://zenodo.org/records/10715792} {Zenodo}. All codes have been deposited in \href {https://github.com/johon-lituobang/REDS_Invariant_Moments} {GitHub}.
\vspace*{-3pt}
\acknow{I sincerely acknowledge the insightful comments from the editor, which considerably elevating the lucidity and merit of this paper. I am also grateful to Ruodu Wang for pointing out important mistakes regarding the $\gamma$-symmetric distribution.}

\showacknow{} 

\bibliography{pnas-sample}

\end{document}